\newtheorem{thm}{Theorem}
\numberwithin{thm}{section}
\newtheorem{prop}[thm]{Proposition}
\newtheorem{lem}[thm]{Lemma}
\theoremstyle{definition}
\newtheorem{exmp}[thm]{Example}
\theoremstyle{remark}
\newcommand{\ip}[2]{\ensuremath{({#1}\,|\,{#2})}} 
\providecommand{\BBb}[1]{{\mathbb{#1}}}
\newcommand{\Bcirc}{\overset{\lower 1.5pt%
              \hbox{$@,@,@,@,@,\scriptscriptstyle\circ$}}B{}}
\newcommand{\Binfty}{\overset{\lower 1.5pt%
              \hbox{$@,@,@,@,@,\scriptscriptstyle\infty$}}B{}}
\newcommand{\bigdot}{\mathbin{\raise.65\jot\hbox{$\scriptscriptstyle\bullet$}}}
\newcommand{\B}{{\BBb B}}
\newcommand{\C}{{\BBb C}}
\newcommand{\erd}{\overset{\lower 1pt\hbox{\large.}}{e}
                  \overset{\lower 1pt\hbox{\large.}}{r}}
\newcommand{\Fcirc}{\overset{\lower 1.5pt%
               \hbox{$@,@,@,@,@,\scriptscriptstyle\circ$}}F{}}
\newcommand{\fracc}[2]{{
                \textstyle\frac{#1}{\raise 1pt\hbox{$\scriptstyle #2$}}}}
\newcommand{\fracci}[2]{{\frac{#1}{\raise 1pt\hbox{$\scriptscriptstyle #2$}}}}
\newcommand{\mlap}{-\!\operatorname{\Delta}}
\newcommand{\op}[1]{\operatorname{#1}}
\renewcommand{\Re}{\operatorname{Re}}
\newcommand{\R}{{\BBb R}}
\newcommand{\Rn}{{\BBb R}^{n}}
\newcounter{enmcount}\renewcommand{\theenmcount}{{\rm\arabic{enmcount}}}
\newcounter{rmcount}\renewcommand{\thermcount}{{\rm\roman{rmcount}}}
\newcounter{Rmcount}\renewcommand{\theRmcount}{{\rm\Roman{Rmcount}}}
\newenvironment{Rmlist}{%
\begin{list}{{\rm(\theRmcount)}}{\setlength{\labelwidth}{\leftmargin}%
\usecounter{Rmcount}}}{\end{list}}
\newcommand{\set}[2]{\{\,#1 \mid #2\,\}}
\newcommand{\Set}[2]{\bigl\{\,#1\bigm| #2\,\bigr\}}
\begin{document}
\title[Log-Convex Decay in Non-Selfadjoint Dynamics]{On a Criterion for  Log-Convex Decay in Non-Selfadjoint Dynamics}
\keywords{Log-convex decay, non-selfadjoint, hyponormal, strictly accretive operators, short-time behaviour.}
\author{Jon Johnsen}
\address{Department of Mathematical Sciences, Aarhus University, Ny Munkegade 118, Building 1530, 8000 Aarhus, Denmark}
\email{jojoh@math.au.dk}
\subjclass[2010]{35E15, 47D06}
\begin{abstract} 
The short-time and global behaviour are studied for 
autonomous linear evolution equations defined by generators
of uniformly bounded holomorphic semigroups in a Hilbert space.
A general criterion for log-convexity in time of the norm of the solution is treated. 
Strict decrease and differentiability at the initial time  results,  with a derivative
controlled by the lower bound of the negative generator, which is proved
strictly accretive with equal numerical and spectral abscissas.
\end{abstract}
\maketitle

\section{Introduction}
The subjects here are the \emph{global} and the \emph{short-time} behaviour of the solutions to 
the Cauchy problem of an autonomous linear evolution equation, throughout with data $u_0\ne0$,
\begin{equation}  \label{ivp-id}
   \partial_t u+Au=0 \quad\text{ for $t>0$},\qquad u(0)=u_0\quad\text{ in $H$}.
\end{equation}
In case the generator $-A$ is non-selfadjoint, this is particularly interesting.
``Non-self-adjoint operators is an old, sophisticated and highly developed subject''
to quote the recent treatise of Sj{\"o}strand \cite{Sj19};  
also the exposition of Helffer \cite[Ch.\ 13]{Hel13} on their pseudo-spectral theory could be mentioned; or \cite{EmTr05}. 

Logarithmically convex decay of 
the solutions was seemingly first studied in the author's paper \cite{JJ18logconv}. This is given a more concise exposition here, with additional examples.

The main purpose below, however, is to improve the results in \cite{JJ18logconv} by adding in Section~2
a much sharper necessary condition on $A$ for the log-convex decay, leading to the improved Theorem~7 below.

\bigskip

It is assumed that $A$ is an accretive operator with domain $D(A)$ in a complex Hilbert space $H$,
with norm $|\cdot|$ and inner product $\ip{\cdot}{\cdot}$, and that $-A$ generates a uniformly bounded, holomorphic
$C_0$-semigroup $e^{-zA}$ for $z$ in an open sector having the form
$\Sigma_{\delta}=\set{z\in\C}{-\delta< \arg  z<\delta}$.  Focus is here on the
``height'' function 
\begin{equation}
  h(t)=|e^{-tA}u_0|.
\end{equation}
This was shown in \cite{JJ18logconv} to be a
\emph{log-convex} function, that is, for $0\le r\le s\le t<\infty$  
\begin{equation} \label{h-logcon}
  \big|e^{-sA}u_0\big| \le \big|e^{-rA}u_0\big|^{1-\frac{s-r}{t-r}} \big |e^{-tA}u_0\big|^{\frac{s-r}{t-r}},
\end{equation}
\emph{if and only if} the possibly non-normal generator $-A$ has the special property
that for every $x\in D(A^2)$, 
\begin{equation} \label{A-cond}
   2\big(\Re \ip{Ax}{x}\big)^2 \le\Re\ip{A^2x}{x}|x|^2+|Ax|^2 |x|^2.
\end{equation}
The present paper and \cite{JJ18logconv} grew out of the author's joint work \cite{ChJo18,ChJo18ax} 
on the inverse heat equation and its well-posedness under the Dirichlet condition. But the main parts also apply to solutions of the 
similar Neumann problem studied in \cite{JJ19,JJ19cor,JJ19iso}.

To elucidate the importance of  \eqref{h-logcon}, hence of \eqref{A-cond}, two remarks are made.

$1^\circ$ The log-convexity in \eqref{h-logcon} implies that the solutions $u$ of \eqref{ivp-id} have important
 global properties  in common with those of the heat equation (the case $A=\mlap$ in $H=
 L_2(\Omega) $ for a bounded domain $\Omega\subset \Rn$). Namely, the height function
 $h(t)=|e^{tA}u_0|$ of \eqref{ivp-id} is 
\begin{quote}
  (i)\hphantom{ii}\quad strictly positive ($h>0$),
\par\noindent
  (ii)\hphantom{i}\quad strictly decreasing ($h'<0$),
 \par\noindent 
  (iii)\quad strictly convex ($\Leftarrow\; h''>0$).
\end{quote}
Here the strict decrease and strict convexity combine to a noteworthy and precise dynamical
property. For example, even if $A$ has eigenvalues in
$\C\setminus\R$, they do not give rise to oscillations in the \emph{size} of the
solution $e^{-tA}u_0$---this is ruled out by strict convexity, which
thus can be seen as a stiffness in the decay of $h(t)$.

In addition,  \eqref{ivp-id} also shares the short-time behaviour with the heat equation, for
in terms of the numerical range $\nu(A)=\Set{\ip{Ax}{x}}{x\in D(A),\ |x|=1}$ and its lower bound $m(A)=\inf \,\Re\nu(A)$,
the onset of decay of $h\in C^\infty(\,]0,\infty[\,)\cap C([0,\infty[\,)$  is constrained by the properties:
\begin{quote}
    (iv)\quad $h(t)$ is right differentiable at $t=0$, with
\par\noindent     
    (v)\hphantom{i}\quad $h'(0)\le -m(A)<0$  for $|u_0|=1$, though 
\par\noindent  
    (vi)\quad $h'(0)=-\Re\ip{Au_0}{u_0}$ whenever $u_0\in D(A)$, $|u_0|=1$.
 \end{quote}
For the considered $A$, (iv)--(vi) follow from log-convexity; cf.\ the below Theorem~\ref{logcon-thm}.

More generally, one could
try to work with the $A$ that merely have strictly convex height functions, but this class is not
easy to characterise. One may therefore view \eqref{A-cond} as a very large class of (possibly
non-normal) generators having the described dynamical properties in common with the selfadjoint
cases.

$2^\circ$ Secondly, the operators satisfying \eqref{A-cond} may be seen to comprise the $A$ that are selfadjoint, $A^*=A$, or 
normal, $A^*A=AA^*$.  But as observed in \cite{ChJo18ax}, one only needs the following two half-way houses,
\begin{equation} \label{hypo-normal}
   D(A)\subset D(A^*),\qquad |Ax|\ge |A^*x| \text{ for every $x\in D(A)$}.
\end{equation}
This property is \emph{hyponormality} for unbounded operators, as studied by Janas~\cite{Jan94}. 
Clearly $A$ is normal if and only if both $A$, $A^* $ are hyponormal, so
this operator class is quite general. As symmetric operators have a full inclusion
$A\subset A^*$, they are also encompassed by the hyponormal class. But there is more:

\begin{exmp}  
  Truly hyponormal operators are easily exemplified: for the advection-diffusion operators
  $A^{\pm}u=-u''\pm u'$ in $L^2(\alpha,\beta)$, for $\alpha<\beta$ in $\R$, it is classical that the
  minimal realisation $A^{\pm}_{\min}$ has the domain $D(A^{\pm}_{\min})=H^2_0(\alpha,\beta)$ because of the
  ellipticity (cf.\ \cite[Thm.\ 6.24]{G09}). The maximal realisation has domain
  $D(A^{\pm}_{\max})=H^2(\alpha,\beta)$, for when $f=-u''\pm u'$ holds for $u$, $f\in L^2$, then
  $-u'\pm u\in L^2$ as primitives of $f$, so $u'\in L^2$; hence $u''\in L^2$. 
  Via the formal adjoints $A^{\mp}$ this gives (cf.\ \cite[Lem.\ 4.3]{G09})
  \begin{equation}
    D(A^{\pm}_{\min})\subsetneq D(A^{\mp}_{\max})= D((A^{\pm}_{\min})^*).
  \end{equation}
  Partial integration for $u\in H^2(\alpha,\beta)$ yields
  $\|-u''\pm u'\|^2 =\| u''\|^2 + \|u'\|^2 \mp (|u'(\beta)|^2-|u'(\alpha)|^2)$, where the last two
  terms vanish for $u\in D(A^{\pm}_{\min})$, so that $\| A^{\pm}_{\min}u\|= \| (A^{\pm}_{\min})^*
  u\|$. Hence the $A^{\pm}_{\min}$ are nonnormal, but nonetheless hyponormal.
\end{exmp}

That every  hyponormal operator $A$ in $H$ necessarily satisfies the log-convexity condition
\eqref{A-cond} is recalled from \cite{JJ18logconv} for the reader's convenience: the inclusion
$D(A)\subset D(A^*)$ gives at once for $x\in D(A^2)$ that
\begin{equation} \label{hypo-ver}
  2(\Re\ip{Ax}{x})^2 \le \frac12|(A+A^*)x|^2|x|^2 
  \le \big(|Ax|^2+\Re\ip{A^2x}{x}\big)|x|^2,
\end{equation}
for in the last step the norm inequality in \eqref{hypo-normal} gives,
because $D(A^2)\subset D(A)\subset D(A^*)$, that
\begin{equation}
  |(A+A^*)x|^2= |Ax|^2+|A^*x|^2 +2\Re\ip{Ax}{A^*x}
  \le 2|Ax|^2+2\Re\ip{A^2x}{x}.
\end{equation}
It is noteworthy, though, that whilst hyponormality expresses a certain interrelationship between
$A$ and its adjoint, criterion \eqref{A-cond} instead involves $A$ and its square $A^2$. 
In addition it was exemplified in \cite{JJ18logconv} that \eqref{A-cond} is unfulfilled for 
certain explicitly given $A\in \B(H)$, even for some symmetric $n\times n$-matrices, $n\ge 2$.

Moreover, the mixed Dirichlet--Neumann  and 
Dirichlet--Robin realisations $A^+_{\op{DN}}$ and $A^-_{\op{DR}}$, respectively, are variational and elliptic, so they
generate holomorphic semigroups in $L^2(\alpha,\beta)$. But none of them are hyponormal,
cf.\  Example~\ref{AD-exmp} below. This
delicate situation around the $A^{\pm}$ should motivate the present analysis of the generators
that have log-convex decay.
It is envisaged that \eqref{A-cond} can give 
interesting examples when $A$ is a suitable realisation of a partial differential operator.

\bigskip

In the above discussion of log-convexity of $h(t)$, its importance for the
dynamics of \eqref{ivp-id} was explained in (i)--(vi) via the more general strict convexity. So it is natural to pose
the question: does log-convexity have advantages in itself?
 At least it gives rise to the (perhaps new) proof technique used in the next section.

\section{A new necessary condition for log-convex decay}

The reader is assumed familiar with semigroup theory, for which 
\cite{EnNa00, Paz83} could be references; the simpler Hilbert space case is exposed e.g.\ in
\cite[Ch.\ 14]{G09}. 

It is recalled that there is a bijection between the $C_0$-semigroups $e^{-tA}$ in $\B(H)$
that are uniformly bounded, i.e.\  
$\|e^{-tA}\|\le M$ for $t\ge0$, and holomorphic in $\Sigma_\delta\subset \C$ for 
$\delta\in\,]0,\frac\pi2[\,$, and the
densely defined, closed operators $A$ in $H$ satisfying a resolvent estimate 
$|\lambda|\big\|(A+\lambda I)^{-1}\big\|\le C$ for all $\lambda \in \{0\}\cup\Sigma_{\delta+\pi/2}$.

It is classical that, since $\sigma(A)\subset\set{z\in\C}{\Re z\ge\varepsilon}$ for some
$\varepsilon>0$, there is a bound $\|e^{-tA}\|\le M_\eta e^{-t\eta}$ for $t\ge0$,
$0<\eta<\varepsilon$. This yields the crude decay estimate 
\begin{equation} \label{Meta-est}
  h(t)\le M_\eta e^{-t\eta}|u_0|.  
\end{equation}
In general the possible $\eta$ are restricted by $0\le \eta < \underline\sigma(A)$ in terms of
the spectral abscissa of $A$,
\begin{equation}
  \underline\sigma(A)=\inf\Re\sigma(A).  
\end{equation}

The below analyses all rely on the recent result that such semigroups consist
of \emph{injections}, which, mentioned for precision, holds without the uniform boundedness:

 \begin{lem}[\cite{JJ19},\cite{JJ19cor}] \label{inj-lem}
 If $-A$ generates a holomorphic semigroup $e^{-zA}$ in $\B(X)$ for some complex Banach space
 $X$, and $e^{-zA}$ is holomorphic in the open sector $\Sigma_{\delta}\subset \C$ given by $|\arg z|<\delta$ for some  
 $\delta>0$, then $e^{-zA}$ is injective on $X$ for each such $z$.
 \end{lem}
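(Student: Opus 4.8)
The plan is to prove injectivity of $e^{-zA}$ by exploiting the holomorphy of the semigroup together with the semigroup law and the fact that $e^{-zA}$ ranges into the domain of every power of $A$. Fix $z_0\in\Sigma_\delta$ and suppose $e^{-z_0A}x=0$ for some $x\in X$. The key observation is that along the ray $\{tz_0/|z_0|:t>0\}$ (or better, inside a subsector), the function $z\mapsto e^{-zA}x$ is holomorphic with values in $X$, and by the semigroup property it vanishes identically on the translate $z_0+\Sigma_{\delta'}$ for a suitable $\delta'$: indeed for $w$ with $\Re w\ge 0$ in an appropriate rotated sense one has $e^{-(z_0+w)A}x=e^{-wA}e^{-z_0A}x=0$. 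So the holomorphic $X$-valued map $F(z)=e^{-zA}x$ vanishes on an open subset of $\Sigma_\delta$, whence by the identity theorem for vector-valued holomorphic functions $F\equiv 0$ on the connected set $\Sigma_\delta$.

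The remaining — and genuinely substantive — step is to pass from $e^{-zA}x=0$ for all $z\in\Sigma_\delta$ to $x=0$. Here one uses strong continuity in a sectorial sense: although the semigroup is only assumed holomorphic on $\Sigma_\delta$ (not necessarily extending strongly continuously to $0$ along the real axis in the $C_0$ sense unless $\delta<\pi/2$ and one approaches within a proper subsector), the standard fact is that for a holomorphic semigroup $e^{-zA}\to I$ strongly as $z\to 0$ within any proper subsector $\Sigma_{\delta'}$, $\delta'<\delta$. Applying this to $x$ gives $x=\lim_{z\to 0}e^{-zA}x=\lim 0=0$. I expect the main obstacle to be the careful bookkeeping of which subsectors are available: one must ensure the translated sector $z_0+\Sigma_{\delta'}$ actually lies inside $\Sigma_\delta$ (true for small enough $\delta'$ since $z_0$ is an interior point), and one must invoke the correct version of strong sectorial continuity at the origin for holomorphic semigroups on a general Banach space $X$.

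An alternative route, avoiding sectorial continuity at $0$ entirely, is to use the Laplace/resolvent representation: for $\Re\lambda$ large, $(\lambda+A)^{-1}x=\int_0^\infty e^{-\lambda t}e^{-tA}x\,dt$, and if $e^{-tA}x=0$ for all $t>0$ then $(\lambda+A)^{-1}x=0$, forcing $x=0$ by injectivity of the resolvent. Combined with the identity-theorem argument above (which reduces the hypothesis $e^{-z_0A}x=0$ to $e^{-tA}x=0$ for all real $t>0$, hence for all $z\in\Sigma_\delta$), this gives a clean proof using only tools that are entirely standard in semigroup theory. I would present this second version as the main line, since it is the most elementary and self-contained, and remark that the first version works equally well.

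Either way, the structural heart of the argument is the same: holomorphy upgrades "vanishing at one point of the sector" to "vanishing on a real ray", the semigroup law then spreads this to the whole sector, and finally a standard reconstruction of $x$ from $\{e^{-zA}x\}$ (via the resolvent integral, or via the limit at the origin) yields $x=0$. I anticipate that the only place requiring care beyond routine manipulation is justifying the vector-valued identity theorem and the interchange of limits/integrals, both of which are classical but should be cited precisely (e.g.\ from \cite{EnNa00} or \cite{Paz83}).
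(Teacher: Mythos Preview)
The paper does not supply its own proof of this lemma; it is quoted from the references \cite{JJ19,JJ19cor} and used as a black box. So there is no in-paper argument to compare against.

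That said, your proposal is correct and is in fact the standard argument (and essentially the one in the cited sources). A couple of minor tightenings: you do not really need two separate steps ``vanishing on $z_0+\Sigma_{\delta'}$'' and ``identity theorem on $\Sigma_\delta$''---once you know $F(z)=e^{-zA}x$ vanishes on a nonempty open subset of the connected open set $\Sigma_\delta$, the identity theorem finishes it in one stroke; the semigroup law is only needed to produce that open set of zeros. Also, in your second route you should note explicitly that the Laplace representation $(\lambda+A)^{-1}x=\int_0^\infty e^{-\lambda t}e^{-tA}x\,dt$ is available precisely because the restriction of a holomorphic semigroup to the positive real axis is a $C_0$-semigroup (this is where strong continuity at $0$ is hiding in the ``resolvent'' version too). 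With those remarks, either of your two endings is fine; the resolvent-integral version is indeed the cleaner one to present, since injectivity of $(\lambda+A)^{-1}$ is immediate from $\lambda\in\rho(-A)$.
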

 
The injectivity is clearly equivalent to the geometric property that two solutions
$e^{-tA}v$ and $e^{-tA}w$ to the differential equation $u'+Au=0$ cannot have any points of confluence in $X$ for $t>0$ when $v\ne w$. 
One obvious consequence of this is its backward uniqueness: $u(T)=0$ implies $u(t)=0$ for $0\le
t\le T$. 

Lemma~\ref{inj-lem} is also important because it allows a calculation of $h'(t)$, $h''(t)$, 
using differential calculus in Banach spaces as exposed e.g.\ by H{\"o}rmander \cite[Ch.\ 1]{H} or Lang
\cite{Lan}. 
This uses that 
$u(t)=e^{-tA}u_0\ne0$ for all $t>0$ when $u_0\ne0$, cf.\  Lemma~\ref{inj-lem}, whence $h(t)>0$:

As the  
inner product on $H$, despite its sesquilinearity, is differentiable on the induced real vector space $H_\R$
with derivative $\ip{\cdot}{y}+\ip{x}{\cdot}$ at $(x,y)\in H_\R\oplus H_\R$, which applies 
to the composite map between open sets $\R_+\to (H_\R\setminus\{0\})\oplus (H_\R\setminus\{0\})\to \R_+\to \R_+$  given by  $t\mapsto \sqrt{\ip{u(t)}{u(t)}}$,
the Chain Rule for real Banach spaces gives

\begin{align} \label{h'-id}
  h'(t)&= \frac{\ip{u'}{u}+\ip{u}{u'}}{2\sqrt{\ip{u}{u}}} = -\frac{\Re\ip{Au}{u}}{|u|};
\\  \label{h''-id}
  h''(t)&= \frac{\ip{A^2u}{u}+2\ip{Au}{Au}+\ip{u}{A^2u}}{2|u|}-\frac{(\Re\ip{Au}{u})^2}{|u|^3}.
\end{align}
The second line follows from the first, since $u''=(e^{-tA}u_0)''=A^2e^{-tA}u_0=A^2u$.

When $A$ satisfies \eqref{A-cond}, the short-time behaviour at $t=0$ is via the information on
$h'(0)$ in (iv)--(vi) specifically controlled by $\nu(A)$, and not by its spectrum $\sigma(A)$.
Moreover, the proofs in \cite{JJ18logconv} also gave that  
$h'(0)=\inf h'<0$, which when combined with (vi) shows that $A$ is a bit better 
than accretive ($m(A)\ge0$) in the sense that its numerical range is contained in the \emph{open} right half-plane,
$\nu(A)\subset \set{z\in\C}{\Re z>0}$.
It seems useful to call $A$ a \emph{positively accretive} operator, when it has this property
(milder than strict accretivity \cite{Kat95}), 
and it was shown in \cite{JJ18logconv} that \eqref{A-cond} implies this.

But there is a significantly sharper
necessary condition, which is given already now because of the novelty. Its proof exploits the log-convexity directly:
\begin{prop} \label{necessary-prop}
  If the generator $A$ has log-convex height functions $h(t)$ on $[0,\infty[\,$ for every
  $u_0\ne0$ and the one-sided derivative $h'(0)$ exists and fulfils $h'(0)=-\Re\ip{Au_0}{u_0}$ when 
  $u_0\in D(A)$ with $|u_0|=1$, then $A$ is strictly accretive and
  \begin{equation}
    m(A)=\underline\sigma(A)>0.
  \end{equation}
\end{prop}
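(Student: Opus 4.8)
The plan is to exploit the hypothesised log-convexity of $h$ directly, using that convexity of $\log h$ forces the one-sided derivative $h'(0)$ to lie below the global exponential decay rate of $h$, which in turn is controlled by $-\underline\sigma(A)$ via the elementary bound \eqref{Meta-est}. First I would fix $u_0\in D(A)$ with $|u_0|=1$; by Lemma~\ref{inj-lem} the solution $e^{-tA}u_0$ never vanishes, so $h(t)=|e^{-tA}u_0|>0$ on $[0,\infty[$ and $\varphi(t):=\log h(t)$ is a finite convex function there with $\varphi(0)=0$ and, by hypothesis, right derivative $\varphi'(0)=h'(0)/h(0)=-\Re\ip{Au_0}{u_0}$. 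Convexity makes the chord slope $t\mapsto\varphi(t)/t$ nondecreasing on $\,]0,\infty[\,$, whence $\varphi'(0)=\lim_{t\to0^+}\varphi(t)/t\le\lim_{t\to\infty}\varphi(t)/t=:L$.

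Next I would bound $L$ from above. For every $\eta\in\,]0,\underline\sigma(A)[\,$ the estimate \eqref{Meta-est} gives $\varphi(t)\le\log M_\eta-\eta t$, so $\varphi(t)/t\le(\log M_\eta)/t-\eta\to-\eta$ as $t\to\infty$; hence $L\le-\eta$, and letting $\eta\uparrow\underline\sigma(A)$ yields $L\le-\underline\sigma(A)$. Combined with $\varphi'(0)\le L$ this gives $\Re\ip{Au_0}{u_0}\ge\underline\sigma(A)$ for every unit $u_0\in D(A)$, and taking the infimum over such $u_0$ shows $m(A)\ge\underline\sigma(A)$.

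For the reverse inequality I would invoke the classical inclusion $\sigma(A)\subset\overline{\nu(A)}$, valid for the m-accretive operator $A$ (cf.\ Kato \cite{Kat95}): $\nu(A)$ is convex by the Toeplitz--Hausdorff theorem and is contained in the sector attached to the holomorphic semigroup, so its complement is connected and meets $\rho(A)$, and Kato's theorem then gives $\underline\sigma(A)=\inf\Re\sigma(A)\ge\inf\Re\nu(A)=m(A)$. Hence $m(A)=\underline\sigma(A)$; and since $\sigma(A)\subset\set{z\in\C}{\Re z\ge\varepsilon}$ for some $\varepsilon>0$, as recalled in the preliminaries, one concludes $m(A)=\underline\sigma(A)\ge\varepsilon>0$, so that $A$ is strictly accretive.

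The hard part is the second step: converting log-convexity into the inequality $h'(0)\le-\underline\sigma(A)$ for every initial datum, which amounts to passing from the short-time slope of $\varphi$ to its slope at infinity and identifying the latter with the exponential rate — one must be careful that the constants $M_\eta$ are killed by division by $t$ and that the admissible $\eta$ exhaust $\underline\sigma(A)$. By contrast $\sigma(A)\subset\overline{\nu(A)}$ and the positivity $\underline\sigma(A)>0$ are standard in the present setting, the only point needing attention there being the connectedness hypothesis in Kato's theorem.
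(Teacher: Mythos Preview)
Your proof is correct and follows essentially the same route as the paper's: both use convexity of $\log h$ together with the bound \eqref{Meta-est} to force $-h'(0)\ge\underline\sigma(A)$ (you via monotonicity of the chord slope $\varphi(t)/t$, the paper via the half-tangent inequality $\varphi(0)+t\varphi'(0)\le\varphi(t)\le\log M_\eta-\eta t$), and both finish with the component theorem for $\C\setminus\overline{\nu(A)}$ (you cite Kato, the paper cites \cite[Thm.\ 1.3.9]{Paz83}).

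One small imprecision: your assertion that $\C\setminus\overline{\nu(A)}$ is connected because $\nu(A)$ lies ``in the sector attached to the holomorphic semigroup'' is not justified---the sectoriality hypothesis constrains $\sigma(A)$, not $\nu(A)$, and a closed convex set containing a line (e.g.\ a vertical line, which is not a priori excluded here) has disconnected complement. You do not actually need the full inclusion $\sigma(A)\subset\overline{\nu(A)}$, though: since you have already shown $m(A)\ge\underline\sigma(A)>0$, the open half-plane $\{\Re z<m(A)\}$ is connected, disjoint from $\overline{\nu(A)}$, and contains $\R_-\subset\rho(A)$, so the component theorem places it entirely in $\rho(A)$; hence $\underline\sigma(A)\ge m(A)$. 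This is exactly how the paper argues (phrased as a contradiction), and it sidesteps the connectedness issue.
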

\begin{proof}
  The log-convexity means that the continuous function $\log h(t)$ is convex on $[0,\infty[\,$, so 
  its graph lies entirely above each of its half-tangents. Applying this at $t=0$ for 
  $u_0\in D(A)$, $|u_0|=1$, and invoking \eqref{Meta-est}, one finds that
  \begin{equation}
     \log h(0)+t\frac{h'(0)}{h(0)}\le \log h(t)\le \log M_\eta -t\eta\qquad\text{ for $t>0$}.
  \end{equation}
  Indeed, $h(t)$ extends to $t<0$ in a $C^1$-fashion
  along its (half-)tangent at $t=0$, after which the Chain Rule applies to $\log h(t)$.
  (Differentiability of $h(t)$ holds for $t>0$ by \eqref{h'-id}, for $t\le 0$ by construction.)

  Now, the above inequalities being valid for all $t>0$, the graphs of the two first order polynomials cannot intersect,
  so their slopes fulfil $h'(0)\le -\eta$ (as $h(0)=|u_0|=1$). 
  Hence $-h'(0)\ge\underline{\sigma}(A)$, as the spectral abscissa is the supremum of the possible $\eta$;
  cf.\ \eqref{Meta-est} ff.
  The assumption on $h'(0)$ in the statement now gives that for any $u_0\in D(A)$ having $|u_0|=1$,
  \begin{equation}
    \Re\ip{Au_0}{u_0}\ge \underline\sigma(A).
  \end{equation}
  This entails the inequality $m(A)\ge\underline\sigma(A)$, hence strict accretivity since 
  $\underline\sigma(A)>0$.

  However, the strict inequality $m(A)>\underline\sigma(A)$ is impossible, for it would imply that $\overline{\nu(A)}$ 
  is contained in the closed half-plane $\Pi_{m(A)}=\set{z}{\Re z\ge m(A)}$ and that 
  $\C\setminus\Pi_{m(A)}=\set{z}{\Re z<m(A)}$
  contains some $\lambda\in\sigma(A)$ as well as $\R_-$ in the resolvent set $\rho(A)$; but then 
  $\sigma(A)$ and $\rho(A)$ intersect the same connectedness component of $\C\setminus\overline{\nu(A)}$, 
  contradicting \cite[Thm.\ 1.3.9]{Paz83}. Hence $m(A)=\underline\sigma(A)$ as claimed.
\end{proof}

\section{Main Results}  \label{disc-sect}
For the reader's sake,
some basics are recalled here: a positive function $f\colon \R\to [0,\infty[\,$ is \emph{log-convex}
if $\log f(t)$ is convex, or more precisely, for all $r\le  t$ in $\R$ and $0<\theta<1$,
\begin{equation} \label{f-logcon}
  f((1-\theta)r+\theta t)\le  f(r)^{1-\theta}f(t)^\theta.
\end{equation}
Note, though, that $t^\theta$ and $t^{1-\theta}$ 
do not require their continuous extensions to $t=0$ when we take $f=h$ below, for since $e^{-tA}$ 
is holomorphic, $h(t)>0$ or equivalently $e^{-tA}u_0\ne0$ holds for $t\ge0$ by Lemma~\ref{inj-lem}.

For the intermediate point $s=(1-\theta)r+\theta t$ an exercise yields $\theta =(s-r)/(t-r)$, so
log-convexity therefore means that, for $0\le r< s<t$,
\begin{equation} \label{f-logcon'}
  f(s) \le f(r)^{1-\frac{s-r}{t-r}} f(t)^{\frac{s-r}{t-r}}.
\end{equation}
This leads to \eqref{h-logcon} for the semigroup.
There  $A$ is just a positive scalar if $\dim H=1$, so \eqref{h-logcon} is then an identity.
For  $\dim H>1$,  the possible validity of \eqref{h-logcon} is by no means obvious to discuss for the operator function
 $e^{-tA}$ in $\B(H)$.

In general log-convexity is stronger than strict convexity for non-constant functions:

\begin{lem} \label{logcon-lem}
  If $f\colon I\to [0,\infty[\,$ is log-convex on an interval or half\-line $I\subset\R$, then
  $f$ is convex---and if $f$ is not constant in any subinterval, then $f$ is \emph{strictly} convex
  on $I$.
\end{lem}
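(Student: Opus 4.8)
The plan is to reduce everything to the elementary fact that the exponential function is convex and strictly monotone. First I would observe that log-convexity of $f$ on $I$ is exactly the statement that $g(t):=\log f(t)$ is convex on $I$ (with values in $[-\infty,\infty[\,$, but I can restrict to where $f>0$; if $f$ vanishes somewhere, convexity of $g$ forces $f\equiv 0$ on a subinterval, which is handled separately or excluded by the running hypothesis that $e^{-tA}u_0\neq0$). Then $f=\exp\circ g$ is the composition of the convex function $g$ with the convex, increasing function $\exp$, and such a composition is convex: for $r,t\in I$ and $\theta\in(0,1)$,
\begin{equation}
  f((1-\theta)r+\theta t)=e^{g((1-\theta)r+\theta t)}\le e^{(1-\theta)g(r)+\theta g(t)}\le (1-\theta)e^{g(r)}+\theta e^{g(t)}=(1-\theta)f(r)+\theta f(t),
\end{equation}
where the first inequality uses convexity of $g$ together with monotonicity of $\exp$, and the second is convexity of $\exp$. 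This already gives the first assertion.

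For the strict statement, suppose $f$ is not constant on any subinterval and fix $r<t$ in $I$ and $\theta\in(0,1)$; write $s=(1-\theta)r+\theta t$. I would argue that at least one of the two inequalities in the chain above is strict. If $g(s)<(1-\theta)g(r)+\theta g(t)$, then since $\exp$ is strictly increasing the first inequality is strict and we are done. Otherwise $g$ is \emph{affine} on $[r,t]$, i.e.\ $g(s)=(1-\theta)g(r)+\theta g(t)$; but then either $g(r)=g(t)$, which would make $g$ constant on $[r,t]$ and hence $f$ constant there, contradicting the hypothesis, or $g(r)\neq g(t)$, in which case strict convexity of $\exp$ on the nondegenerate interval with endpoints $g(r),g(t)$ makes the second inequality strict. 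Either way $f(s)<(1-\theta)f(r)+\theta f(t)$, which is strict convexity on $I$.

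The only genuinely delicate point is the bookkeeping around zeros of $f$ and the value $-\infty$ of $\log f$; but in the intended application $f=h$ is strictly positive by Lemma~\ref{inj-lem}, so this is not an obstacle here, and in the general statement one simply notes that a nonnegative log-convex function which is not identically zero on any subinterval is automatically strictly positive on the interior of $I$ (a zero would propagate along chords of the convex function $\log f$). With that remark in place the argument above is complete; I expect the write-up to be short, the main conceptual content being the ``convex $\circ$ increasing-convex is convex, and strictly so unless the inner function is affine with equal endpoint values'' observation.
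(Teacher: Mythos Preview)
Your argument is correct and is essentially the paper's proof in different clothing: your displayed chain $e^{g(s)}\le e^{(1-\theta)g(r)+\theta g(t)}\le (1-\theta)e^{g(r)}+\theta e^{g(t)}$ is exactly the paper's $f(s)\le f(r)^{1-\theta}f(t)^\theta\le(1-\theta)f(r)+\theta f(t)$ via Young's inequality, and both proofs obtain strictness from strict convexity of $\exp$ when the endpoint values differ and from the non-constancy hypothesis when they coincide. The only cosmetic difference is that you organise the strict case by asking which of the two inequalities is strict (and invoke the standard fact that equality at one interior point forces a convex function to be affine on the whole segment), whereas the paper branches directly on whether $f(r)=f(t)$ and, in the equal-endpoint case, picks an interior point $u$ with $f(u)<f(r)$ and compares chords explicitly.
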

\begin{proof}
  Convexity on $I$ follows from Young's inequality for the dual exponents $1/\theta$ and
  $1/(1-\theta)$:
  \begin{equation}
    f((1-\theta)r+\theta t)\le f(r)^{1-\theta}f(t)^\theta \le (1-\theta)f(r)+\theta f(t).
  \end{equation}

In case $f(r)\ne f(t)$, the last inequality will be strict, as equality holds in Young's inequality if
and only if the numerators are identical (cf.\ \cite[p.\ 14]{NiPe06}). 
This yields strict convexity in this case. 

If there is a common value $C=f(r)=f(t)$ for some $r<t$ in $I$, there is by assumption a
$u\in\,]r,t[\,$ so that $f(u)\ne f(r)$, and because of the convexity of $f$ this entails
that $f(u)<f(r)=f(t)$: when $r<s\le u$ one may write $s=(1-\theta)r+\theta u$ and $s=(1-\omega)r+\omega t$ for suitable
$\theta,\omega\in\,]0,1[\,$, so clearly 
\begin{equation}
  \begin{split}
  f(s)&\le(1-\theta)f(r)+\theta f(u)
\\
      &<(1-\theta)f(r)+\theta f(t) = C= (1-\omega)f(r)+\omega f(t);  
  \end{split}  
\end{equation}
similarly for $u\le s<t$; so $f$ is strictly convex.
\end{proof}

As examples it is noted that whilst $e^{t}$ is log-convex, $f(t)=e^{t}-1$ is not log-convex as
$(\log f)''<0$. However, when $f\colon I\to \,]0,\infty[\,$ is log-convex, so is the stretched function
defined for $a<b$ in $I$ as
\begin{equation}
  f_{a,b}(t)=
  \begin{cases}
    f(t)\quad\text{for $t<a$}, \\ f(a)\quad \text{for $a\le t<b$}, \\ f(t-b)\quad\text{for $b\le t$}.
  \end{cases}
\end{equation}
This follows from the geometrically obvious fact that the convexity of $\log f$ survives the
stretching. Since $f_{a,b}$ clearly is not strictly convex, the last assumption of
Lemma~\ref{logcon-lem} is necessary. Moreover, a small exercise yields, cf.\ \cite{JJ18logconv},

\begin{lem} \label{decay-lem}
  If $f\colon [0,\infty[\,\to \R_+$ is convex and $f(t)\to 0$ for
  $t\to\infty$, then $f$ is \emph{strictly} monotone decreasing.
\end{lem}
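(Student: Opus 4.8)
The plan is to argue by contradiction, exploiting the elementary fact that the secant slopes of a convex function are monotone non-decreasing, i.e.\ for $a<b<t$ one has $(f(b)-f(a))/(b-a)\le (f(t)-f(b))/(t-b)$. Suppose $f$ is \emph{not} strictly decreasing; then there exist $0\le a<b$ with $f(a)\le f(b)$, and I would split into two cases.

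First the case of a genuine increase, $f(a)<f(b)$. Here the slope $m=(f(b)-f(a))/(b-a)$ is strictly positive, and the monotone-slope property forces $f(t)-f(b)\ge m(t-b)$, hence $f(t)\ge f(b)+m(t-b)$, for every $t>b$. Letting $t\to\infty$ makes $f(t)$ unbounded, contradicting $f(t)\to 0$. Consequently $f$ must in fact be non-increasing on $[0,\infty[\,$.

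It then remains to rule out $f(a)=f(b)$ for some $a<b$. Since $f$ is now known to be non-increasing, it is constant on $[a,b]$, with common value $c:=f(a)=f(b)$, and $c>0$ because $f$ takes values in $\R_+$. For $t>b$ I would write $b$ as a convex combination $b=\lambda a+(1-\lambda)t$ with $\lambda\in\,]0,1[\,$; convexity gives $c=f(b)\le\lambda c+(1-\lambda)f(t)$, so $f(t)\ge c$, while monotonicity gives $f(t)\le f(b)=c$. Thus $f\equiv c$ on $[b,\infty[\,$, whence $f(t)\to c>0$, again contradicting $f(t)\to 0$. These two cases being exhaustive, $f$ is strictly decreasing.

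There is no real obstacle here; the only point deserving care is that the hypothesis $f>0$ (rather than merely $f\ge 0$) is genuinely used in the second case — without it the function $f(t)=\max(1-t,0)$ would be a convex, non-negative counterexample tending to $0$. In the intended application $f=h$ this is automatic, since $h(t)=|e^{-tA}u_0|>0$ for all $t\ge 0$ by Lemma~\ref{inj-lem}.
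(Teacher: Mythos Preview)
Your proof is correct. The paper itself does not prove this lemma; it merely states that ``a small exercise yields'' the result and refers to \cite{JJ18logconv}, so there is no in-paper argument to compare against. Your contradiction argument via the monotonicity of secant slopes is exactly the kind of elementary exercise the author has in mind, and your remark that the strict positivity $f>0$ is essential (with the counterexample $\max(1-t,0)$) is a nice observation that the paper does not make explicit.
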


By now it is obvious that if a height function $h(t)$ is log-convex on $[0,\infty[\,$ for
some $u_0\ne0$, it fulfils the first assumption in Lemma~\ref{decay-lem} by the convexity
statement in Lemma~\ref{logcon-lem}, and the second assumption holds because of \eqref{Meta-est}.
Therefore such $h(t)$ is necessarily strictly decreasing on $[0,\infty[\,$---hence non-constant 
in any subinterval, and by Lemma~\ref{logcon-lem} therefore strictly convex.

That $h(t)>0$ allows an analysis of its log-convexity using a characterisation of the
log-convex $C^2$-functions as the solutions to a differential inequality:

\begin{lem} \label{logconvex-lem}
If $f\in C([0,\infty[\,,\R_+)$ is $C^2$ for $t>0$,
the following are equivalent:
\begin{Rmlist}
  \item $f'(t)^2\le f(t)f''(t)$ holds whenever $0<t<\infty$.
  \item $f(t)$ is log-convex on the open half\-line $\,]0,\infty[\,$, cf.\ \eqref{f-logcon'}.
\end{Rmlist}
In the affirmative case $f(t)$ is log-convex also on the closed half\-line $[0,\infty[\,$.
\end{lem}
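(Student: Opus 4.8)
The plan is to prove the equivalence of (i) and (ii) by the standard route through the second derivative of $\log f$, and then to handle the boundary point $t=0$ separately using continuity and a limiting argument.

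First I would observe that for $t>0$ the function $g(t)=\log f(t)$ is well defined and $C^2$, since $f$ is $C^2$ and strictly positive there. A direct computation gives $g'(t)=f'(t)/f(t)$ and
\begin{equation} \label{gpp-id}
  g''(t)=\frac{f''(t)f(t)-f'(t)^2}{f(t)^2}.
\end{equation}
Hence the differential inequality in (i) is equivalent to $g''(t)\ge0$ for all $t>0$, i.e.\ to convexity of $g=\log f$ on the open halfline $\,]0,\infty[\,$. By the classical characterisation of convex $C^2$-functions (nonnegativity of the second derivative), this in turn is equivalent to the chordal inequality for $\log f$ on $\,]0,\infty[\,$, which upon exponentiation is exactly \eqref{f-logcon'} restricted to $0<r<s<t$; so (i)$\Leftrightarrow$(ii) on the open halfline.

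It remains to extend log-convexity to the closed halfline, i.e.\ to cover the case $r=0$ in \eqref{f-logcon'}. Here I would fix $0<s<t$ and, for small $\varepsilon\in\,]0,s[\,$, apply the already-established inequality on $\,]0,\infty[\,$ with $r$ replaced by $\varepsilon$:
\begin{equation}
  f\Big(\tfrac{(t-s)\varepsilon+(s-\varepsilon)t}{t-\varepsilon}\Big)\le f(\varepsilon)^{1-\frac{s-\varepsilon}{t-\varepsilon}}f(t)^{\frac{s-\varepsilon}{t-\varepsilon}},
\end{equation}
where the argument on the left is the point dividing $[\varepsilon,t]$ in the ratio corresponding to $\theta_\varepsilon=(s-\varepsilon)/(t-\varepsilon)$. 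As $\varepsilon\downarrow0$ this argument tends to $s$ and $\theta_\varepsilon\to s/t$, while $f$ is continuous on $[0,\infty[\,$ (and strictly positive, so the exponents cause no trouble at $f(\varepsilon)\to f(0)$); passing to the limit yields $f(s)\le f(0)^{1-s/t}f(t)^{s/t}$, which is \eqref{f-logcon'} with $r=0$. Combined with the open-halfline case this gives log-convexity on $[0,\infty[\,$.

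The only mildly delicate point is the limit step: one must check that the left-hand argument is a continuous function of $\varepsilon$ converging to $s$ and that it stays in the region where the open-halfline inequality is valid (it does, since it lies strictly between $\varepsilon$ and $t$), and that $f(\varepsilon)^{1-\theta_\varepsilon}\to f(0)^{1-s/t}$, which holds because $f>0$ and continuous makes $\varepsilon\mapsto f(\varepsilon)$ bounded away from $0$ on $[0,s]$. None of this is hard, but it is where the hypothesis $f\in C([0,\infty[\,,\R_+)$ — rather than merely $f\in C^2(\,]0,\infty[\,)$ — is genuinely used.
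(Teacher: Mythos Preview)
Your proof is correct and follows essentially the same route as the paper's: compute $(\log f)''=(f''f-(f')^2)/f^2$ to get (I)$\iff$(II) on $\,]0,\infty[\,$, then pass to the limit $r\to0^+$ using continuity of $f$ at $0$ to extend \eqref{f-logcon'} to $r=0$. One small remark: in your limiting step the displayed argument $\tfrac{(t-s)\varepsilon+(s-\varepsilon)t}{t-\varepsilon}$ simplifies identically to $s$, so the left-hand side is constant in $\varepsilon$ and you need only take the limit on the right---this is exactly how the paper phrases it.
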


\begin{proof}
By the assumptions $F(t)=\log f(t)$ is defined for $t\ge0$ and $C^2$
for $t>0$ and
\begin{equation}
  F''(t)=\Big (\frac{f'(t)}{f(t)}\Big)'=\frac{f''(t)f(t)-f'(t)^2}{f(t)^2}.
\end{equation}
Hence (I) is equivalent to $F''(t)\ge 0$ for $t>0$, which is the criterion for the $C^2$-function 
$F$ to be convex for $t>0$; which is a paraphase of the condition (II) for log-convexity of the 
positive function $f(t)$ for $t>0$.  

Letting $r\to 0^+$ for fixed $s<t$, the continuity of $f(r)$ and 
of, say $\exp({\frac{t-s}{t-r}}\log f(r))$, yields that 
\eqref{f-logcon'} is valid for $0=r<s<t$. So $f$ is log-convex on $[0,\infty[\,$. 
\end{proof}

  The formulation of the lemma was inspired by the discussion of
  convexity notions in \cite{NiPe06}.
   Whilst $f$ in $C^2$ is convex if and only if 
  $f''\ge0$, this positivity is clearly fulfilled if $f$ satisfies
  (I), as $f(t)>0$ is assumed---but the positivity then holds in a qualified way, equivalent to
  log-convexity, since (I)$\iff$(II).

The differential inequality in (I) of Lemma~\ref{logconvex-lem} is straightforwardly seen to amount to the following for 
$h(t)$, cf.\ \eqref{h'-id}--\eqref{h''-id}, 
\begin{equation} \label{h-ineq'}
  2(\Re\ip{Au}{u})^2\le   \big(\Re\ip{A^2u}{u}+|Au|^2\big)|u|^2.
\end{equation}
Obviously this  is fulfilled for every $t>0$ when $A$ satisfies \eqref{A-cond}
above, for $u(t)=e^{-tA}u_0$ belongs to the subspace $D(A^n)\subset D(A^2)$ for every $n\ge2$, 
and all $u_0\in H$, when the semigroup is holomorphic. Moreover, the continuity of $h(t)$ and of
its derivatives $h'$, $h''$ given above show that 
$h\in C^2$ for $t>0$. 
So according to  Lemma~\ref{logconvex-lem}, condition \eqref{A-cond} implies that $h(t)=|e^{-tA}u_0|$ is log-convex
on the closed half-line $[0,\infty[\,$.

Conversely, when the height function $h(t)$ is log-convex for each $u_0\ne0$, then the generator $-A$
fulfils \eqref{A-cond}. Indeed, $h$ then fulfils (I) above by the log-convexity,
hence \eqref{h-ineq'} holds. Especially it is seen by insertion of an arbitrary $u_0\in D(A^2)$ in
\eqref{h-ineq'} and commutation of $A$ and $A^2$ with the semigroup that
\begin{equation} 
  2(\Re\ip{e^{-tA}Au_0}{e^{-tA}u_0})^2\le   \Big(\Re\ip{e^{-tA}A^2u_0}{e^{-tA}u_0}+|e^{-tA}Au_0|^2\Big)|e^{-tA}u_0|^2.
\end{equation}
By passing to the limit for $t\to 0^+$ it follows by continuity that
\eqref{A-cond} holds for $x=u_0$. 

Consequently \eqref{A-cond} characterises the generators $-A$ of
uniformly bounded, analytic semigroups having log-convex height functions for all non-trivial initial data.

The above discussion now allows the following sharpening of \cite[Thm.\ 2.5]{JJ18logconv}:

\begin{thm} \label{logcon-thm}
  When $-A$ denotes a generator of a uniformly bounded, holomorphic $C_0$-semigroup $e^{-tA}$ in a
  complex Hilbert space $H$, then the following properties are equivalent:
  \begin{Rmlist}
    \item 
        $2\big(\Re\ip{Ax}{x}\big)^2\le \Re\ip{A^2x}{x}|x|^2+|Ax|^2|x|^2$ for every $x\in D(A^2)$.
   
\item $h(t)=|e^{-tA}u_0|$ is log-convex for every
     $u_0\ne0$; that is, whenever $0\le r<s<t$,
\begin{equation}  \label{h-logcon''}
  \big|e^{-sA}u_0\big| \le \big|e^{-rA}u_0\big|^{\frac{t-s}{t-r}}  \big|e^{-tA}u_0\big|^{\frac{s-r}{t-r}}.
\end{equation}
  \end{Rmlist}
In the affirmative case, $h(t)$ is for $u_0\ne0$ strictly positive, strictly decreasing  
and strictly convex on the closed half\-line $[0,\infty[\,$ and moreover differentiable from the right at $t=0$, 
with a derivative in $[-\infty,0[\,$, 
which for $|u_0|=1$ satisfies
\begin{align}  \label{h'mA-ineq}
  h'(0)&=\inf_{t>0} h'(t)\le - m(A)<0;
\\
\intertext{and if $u_0\in D(A)$ with $|u_0|=1$, then $h\in C^1([0,\infty[\,,\R)\bigcap C^\infty(\R_+,\R)$ and}
  h'(0)&=-\Re\ip{Au_0}{u_0}.
 \label{h'-eq}
\end{align}
Furthermore $\underline{\sigma}(A)=m(A)>0$ holds, in particular such  $A$ are strictly accretive.
\end{thm}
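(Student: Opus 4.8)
\textbf{Proof strategy for Theorem~\ref{logcon-thm}.}
The plan is to assemble the theorem from the pieces already prepared in the discussion, organising them into the equivalence and then the list of consequences. For the equivalence (I)$\iff$(II): for the direction (I)$\implies$(II) I would invoke that $u(t)=e^{-tA}u_0\in D(A^n)$ for all $n$ when the semigroup is holomorphic, so that \eqref{A-cond} applied with $x=u(t)$ yields \eqref{h-ineq'} for every $t>0$; since $h\in C^2(\,]0,\infty[\,)$ by \eqref{h'-id}--\eqref{h''-id} and \eqref{h-ineq'} is exactly condition (I) of Lemma~\ref{logconvex-lem} for $f=h$, that lemma gives log-convexity of $h$ on $[0,\infty[\,$, which is \eqref{h-logcon''}. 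For (II)$\implies$(I): log-convexity of each $h$ forces (I) of Lemma~\ref{logconvex-lem}, hence \eqref{h-ineq'} for all $t>0$; inserting an arbitrary $u_0\in D(A^2)$, commuting $A$ and $A^2$ past $e^{-tA}$, and letting $t\to0^+$ recovers \eqref{A-cond} at $x=u_0$ by continuity of the inner product. This is all already spelled out in the paragraphs preceding the theorem, so here it is only a matter of citing it.

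For the consequences in the affirmative case I would proceed in order: (a) strict positivity $h>0$ is immediate from Lemma~\ref{inj-lem}; (b) combining the convexity half of Lemma~\ref{logcon-lem} with the decay bound \eqref{Meta-est} puts $h$ in the hypotheses of Lemma~\ref{decay-lem}, so $h$ is strictly decreasing, hence non-constant on every subinterval, and then the second clause of Lemma~\ref{logcon-lem} upgrades this to strict convexity on $[0,\infty[\,$; (c) right-differentiability at $t=0$ with $h'(0)\in[-\infty,0[\,$ follows because a convex function on $[0,\infty[\,$ always has a right derivative at the left endpoint, valued in $[-\infty,+\infty[\,$, while strict decrease rules out the value $0$ and larger. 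For \eqref{h'mA-ineq}, convexity of $h$ gives that $t\mapsto h'(t)$ is nondecreasing on $\R_+$, so $h'(0)=\lim_{t\to0^+}h'(t)=\inf_{t>0}h'(t)$; the bound $h'(0)\le-m(A)$ I would get by writing, for $|u_0|=1$ and small $t>0$, the difference quotient $(h(t)-1)/t\le (\,|e^{-tA}u_0|^2-1\,)/(2t)$ (from $h(t)\le\tfrac12(h(t)^2+1)$) and recognising the right side as tending to $-\Re\ip{Au_0}{u_0}$ when $u_0\in D(A)$, while for general $u_0$ a density/approximation argument through $\nu(A)$ yields $\le -m(A)$; alternatively one cites the corresponding computation from \cite{JJ18logconv}. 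For \eqref{h'-eq}: when $u_0\in D(A)$, formula \eqref{h'-id} extends continuously to $t=0$ since $u(t)\to u_0\ne0$ and $Au(t)\to Au_0$, giving $h\in C^1([0,\infty[\,)$ with $h'(0)=-\Re\ip{Au_0}{u_0}$, and $h\in C^\infty(\R_+)$ is clear from holomorphy of the semigroup.

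Finally, the spectral statement $\underline\sigma(A)=m(A)>0$ is precisely Proposition~\ref{necessary-prop}: its two hypotheses---log-convexity of all $h(t)$, and $h'(0)=-\Re\ip{Au_0}{u_0}$ for $u_0\in D(A)$ with $|u_0|=1$---are furnished respectively by (II) and by \eqref{h'-eq} just established, so the proposition applies verbatim and delivers strict accretivity together with the equality of abscissas.

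\textbf{Main obstacle.} The delicate point is \eqref{h'mA-ineq}, specifically the inequality $h'(0)\le-m(A)$ for \emph{arbitrary} $u_0$ with $|u_0|=1$, not merely $u_0\in D(A)$: one cannot differentiate \eqref{h'-id} at $t=0$ without a domain assumption, so the estimate must be extracted either from the tangent-line inequality for the convex function $\log h$ against \eqref{Meta-est} (which only gives $h'(0)\le-\underline\sigma(A)$, not $\le-m(A)$ directly), or by approximating $u_0$ by elements of $D(A)$ and controlling $m(A)$ under this limit; reconciling these and confirming that $h'(0)=\inf_{t>0}h'(t)$ holds even when $h'(0)=-\infty$ is the part that needs care, and it is cleanest to borrow the relevant short-time estimate from \cite{JJ18logconv} rather than rederive it.
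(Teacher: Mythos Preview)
Your proposal is essentially correct and tracks the paper's structure for the equivalence (I)$\iff$(II), for strict positivity/decrease/convexity, for formula \eqref{h'-eq}, and for the closing appeal to Proposition~\ref{necessary-prop}. The one place where you diverge is exactly what you flag as the ``main obstacle'': the inequality $h'(0)\le -m(A)$ for \emph{arbitrary} $u_0$ with $|u_0|=1$.

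Here the paper's argument is short and dissolves the difficulty you describe. Since the semigroup is holomorphic, $u(t)=e^{-tA}u_0\in D(A)$ for every $t>0$ regardless of whether $u_0\in D(A)$, so formula \eqref{h'-id} is available for all $t>0$. From the definition of $m(A)$ one has $\Re\ip{Au(t)}{u(t)}\ge m(A)\,|u(t)|^2$, hence
\[
  h'(t)=-\frac{\Re\ip{Au(t)}{u(t)}}{|u(t)|}\le -m(A)\,|u(t)|=-m(A)\,|e^{-tA}u_0|.
\]
Taking $\limsup$ as $t\to0^+$ and using $|e^{-tA}u_0|\nearrow1$ together with $h'(0)=\inf_{t>0}h'(t)\le\limsup_{t\to0^+}h'(t)$ gives $h'(0)\le -m(A)$ directly. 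No density argument, no passage through $(h(t)^2-1)/(2t)$, and no appeal to \cite{JJ18logconv} is needed. Your route via $h(t)\le\tfrac12(h(t)^2+1)$ only reaches $-\Re\ip{Au_0}{u_0}$ when $u_0\in D(A)$ (where it merely reproduces \eqref{h'-eq}), and the subsequent ``density through $\nu(A)$'' step is precisely the part you leave unjustified; the paper sidesteps it entirely by staying at strictly positive $t$ until the final limit.
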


\begin{proof}
That (I)$\iff$(II) was seen in the considerations after Lemma~\ref{logconvex-lem}. The 
strict positivity was derived after 
Lemma~\ref{inj-lem}, strict decrease and strict convexity after Lemma~\ref{decay-lem}.

Convexity of $h$ entails  $h''(t)\ge0$ for $t>0$, so $h'(t)$ is increasing on $\R_+$ and
 $\lim_{t\to 0^+ }h'(t)=\inf_{t>0} h'$ exists in $[-\infty,0[\,$, as $h'<0$. 
By the Mean Value Theorem, some $t'\in\,]0,t[\,$ fulfils
\begin{equation}  \label{h't'-id}
  (h(t)-h(0))/t=h'(t')<0.
\end{equation}
Therefore  $h(t)$ is (extended) differentiable from the right at $t=0$, with $h'(0)=\inf h'$. 
Since the strong continuity and strict decrease of $h$ gives  $|e^{-tA}u_0|\nearrow 1$ for $t\to 0^+$, 
an application of \eqref{h'-id} yields
\begin{equation}
  h'(0)=\inf h'\le \limsup_{t\to0^+}h'(t)\le \limsup_{t\to0^+} (-m(A)|e^{-tA}u_0|) \le -m(A).
\end{equation}

In case $u_0\in D(A)$ and $|u_0|=1$, one can exploit that $h'(0)=\lim_{t\to 0^+}h'(t)$ by commuting $A$ with 
$e^{-tA}$ in \eqref{h'-id}, which in the limit gives, 
because of the strong continuity at $t=0$ and the continuity of inner products, 
\begin{equation} \label{h'0-id}
  h'(0)=\lim_{t\to 0^+} -\Re\ip{e^{-tA}Au_0}{e^{-tA}u_0} 
         = -\Re\ip{Au_0}{u_0}.
\end{equation}
In addition, it is seen that $h'(0)$ is a real number for
$u_0\in D(A)$, so $h\in C^1([0,\infty[\,,\R)$ for such $u_0$. 
For general $u_0\in H$ it follows from the Chain Rule that $h\in C^\infty(\R_+,\R)$. 

Finally, the last line of the statement results from Proposition~\ref{necessary-prop}.
\end{proof}

The conclusions of the theorem apply in particular to every hyponormal generator $-A$, cf.\ the account in 
\eqref{hypo-ver} that such $A$ always satisfy the criterion \eqref{A-cond}.

It is instructive to review condition \eqref{A-cond} in case the generator $A$ is variational.
That is, for some Hilbert space $V\subset H$ algebraically, topologically and densely and some
sesquilinear form $a\colon V\times V\to \C$, which is $V$-bounded and $V$-elliptic in the sense
that (with $\|\cdot\|$ denoting the norm in $V$) for some $C_0>0$
\begin{equation}
  \Re a(u,u)\ge C_0\|u\|^2 \qquad\text{for all $u\in V$},
\end{equation}
it holds for $A$ that $\ip{Au}{v}=a(u,v)$ for all
$u\in D(A)$ and  $v\in V$. Lax--Milgram's lemma on the properties of $A$ is
exposed in \cite[Ch.\ 12]{G09} and \cite[Ch.\ 3]{Hel13}.
It is classical that $-A$ generates a holomorphic semigroup $e^{-tA}$ in $\B(H)$; an explicit proof
is e.g.\ given in \cite[Lem.\ 4]{ChJo18ax}.

For such $A$, the log-convexity criterion \eqref{A-cond} can be stated for $V$-elliptic variational $A$ as a
comparison of sesquilinear forms, cf.\ \cite{JJ18logconv},
\begin{equation}
  \big(\Re a(u,u)\big)^2\le \Re \big(a_{\Re}(Au,u)\big)\ip{u}{u}\qquad\text{ for $u\in D(A^2)$}.
\end{equation}

\begin{exmp}  \label{AD-exmp}
  To see that variational operators need not be hyponormal, one may take $H=L_2(\alpha,\beta)$, with norm 
  $\|f\|_0=(\int_{\alpha}^{\beta} |f(x)|^2\,dx)^{1/2}$, for reals $\alpha<\beta$ and let 
  $V=\set{v\in H^1(\alpha,\beta)}{u(\alpha)=0}$ be a subspace of the first Sobolev space with norm
  given by $\|f\|_1^2=\int_{\alpha}^{\beta} (|f(x)|^2+|f'(x)|^2)\,dx$ and the  sequilinear forms
  \begin{equation}
    a(u,v)=\int_{\alpha}^{\beta} u'(x)\overline{v'(x)}+ u'(x)\overline{v(x)}\,dx.
  \end{equation}
 This is clearly $V$-bounded, and also $V$-elliptic: partial integration gives 
 $\Re a(u,u)=\|u'\|_0^2+\frac12|u(\beta)|^2$, and
 $\Re a(u,u)\ge C_0\|u\|_1^2$ follows for all $u\in V$ and e.g.\ $C_0=\min(\frac12,(\beta-\alpha)^{-2})$ by ignoring
   the last term and using Poincar\'e's inequality (its standard proof,
   e.g.\ \cite[Thm.\ 4.29]{G09}, applies to $V$).

 The induced $A^+_{\op{DN}}$ acts in the distribution space $\mathcal{D}'(\alpha,\beta)$ of Schwartz \cite{Swz66} as 
 $A^+_{\op{DN}}u=-u''+u'$,
 which is the advection-diffusion operator with a mixed Dirichlet and Neumann condition,
 \begin{equation}
   D(A^+_{\op{DN}})=\Set{u\in H^2(\alpha,\beta)}{u(\alpha)=0,\ u'(\beta)=0}.
 \end{equation}
(The Dirichlet realisation of $u'-u''$ has been studied at length; cf.\
\cite[Ch.\ 12]{EmTr05}.)

 As $(A^+_{\op{DN}})^*$ is induced by $\overline{a(v,u)}$, one finds similarly 
 $(A^+_{\op{DN}})^*u=-u''-u'=A^-_{\op{DR}}u$ with the domain characterised by a mixed Dirichlet and Robin condition,
 \begin{equation}
   D((A^+_{\op{DN}})^*)=D(A^-_{\op{DR}})=\Set{u\in H^2(\alpha,\beta)}{u(\alpha)=0,\ u'(\beta)+u(\beta)=0}.
 \end{equation}
 As both $D(A^+_{\op{DN}})$ and $D((A^+_{\op{DN}})^*)$ contain functions outside their intersection, 
 \eqref{hypo-normal} shows
 that neither $A^+_{\op{DN}}$ nor $(A^+_{\op{DN}})^*=A^-_{\op{DR}}$ is hyponormal. This is part of the motivation for the study of
 condition \eqref{A-cond}.
\end{exmp}


\providecommand{\bysame}{\leavevmode\hbox to3em{\hrulefill}\thinspace}
\providecommand{\MR}{\relax\ifhmode\unskip\space\fi MR }
\providecommand{\MRhref}[2]{%
  \href{http://www.ams.org/mathscinet-getitem?mr=#1}{#2}
}
\providecommand{\href}[2]{#2}

\end{document}